\numberwithin{equation}{section}
\newcommand{\ad}{\dim_\textup{A}}
\newcommand{\as}{\dim_\textup{A}^\theta}
\newcommand{\bd}{\dim_\textup{B}}
\newcommand{\ubd}{\overline{\dim}_\textup{B}}
\newcommand{\lbd}{\underline{\dim}_\textup{B}}
\newcommand{\qad}{\dim_\textup{qA}}
\newcommand{\hd}{\dim_\textup{H}}
\newtheorem{thm}{Theorem}[section]
\newtheorem{theorem}[thm]{Theorem}
\newtheorem{lemma}[thm]{Lemma}
\newtheorem{question}[thm]{Question}
\title{\vspace{-15mm}Interpolating between dimensions}
\author{Jonathan M. Fraser\\ \\
The University of St Andrews \\
\noindent  Email: jmf32@st-andrews.ac.uk}
	\date{}
\begin{document}

\maketitle

\begin{abstract}Dimension theory lies at the heart of fractal geometry and concerns the rigorous quantification of how large a subset of a metric space is.  There are many notions of dimension to consider, and part of the richness of the subject is in understanding how these different notions fit together, as well as how their subtle differences give rise to different behaviour.  Here we survey a new approach in dimension theory, which seeks to unify the study of individual dimensions by viewing them as different facets of the same object. For example, given two notions of dimension, one may be able to define a continuously parameterised family of dimensions which interpolates between them.  An understanding of this `interpolation function' therefore contains more information about a given object than the two dimensions considered in isolation.  We pay particular attention to two concrete examples of this, namely the \emph{Assouad spectrum}, which interpolates between the box and (quasi-)Assouad dimension, and the \emph{intermediate dimensions}, which interpolate between the Hausdorff and box dimensions.  
\\ \\
\emph{Key words and phrases}: dimension theory, Hausdorff dimension, box dimension, Assouad dimension, Assouad spectrum, intermediate dimensions.
\\
\emph{Mathematics Subject Classification}: Primary: 28A80; Secondary: 37C45.
\end{abstract}

\section{Dimension theory and a new perspective}
\label{sec:1}

Roughly speaking, a \emph{fractal} is an object which exhibits complexity on arbitrarily small scales.  Such objects are hard to analyse, and cannot be easily measured.  Dimension theory is the study of how to measure fractals, specifically aimed at quantifying how they fill up space on small scales.  This is done by developing precise mathematical formulations of dimension and then developing techniques which can be used to compute these dimensions in specific settings, such as, for sets invariant under a dynamical system or generated by a random process, see Figure \ref{fractfig}.  There are many ways to define dimension which naturally extend our intuitive idea that lines have dimension 1 and  squares have dimension 2, etc.  The box dimension is a particularly natural and easily digested notion of dimension, which comes from understanding how a coarse measure of size behaves  as the resolution increases.  More precisely, given a bounded set $F \subseteq \mathbb{R}^d$ and  a  scale (resolution) $r>0$,  let $N_r(F)$ denote the minimum number of sets of diameter $r$ required to cover $F$, see Figure \ref{fig:bd}.  This should increase as $r \to 0$ and it is natural to expect $r \approx r^{-\delta}$ for some $\delta>0$, which can be readily interpreted  as the `dimension' of $F$.  As such, the \emph{upper box dimension} of $F$ is defined by
\[
\ubd  F \ = \  \limsup_{r \to 0} \frac{\log N_r(F)}{-\log r}.
\]
If   the $\limsup$ is replaced by  $\liminf$,  one gets the \emph{lower box dimension} $\lbd F$.  However, often the $\limsup$ and $\liminf$ agree, in which case we refer to the common value as the \emph{box dimension}, denoted by  $\bd F$.   Despite how convenient and natural this definition is, it has some theoretical disadvantages, such as not being countably stable, see \cite[page 40]{falconer}.  A more sophisticated notion, which is similar in spirit, is the \emph{Hausdorff dimension}.  This can be defined, for any set $F \subseteq \mathbb{R}^d$, by
\begin{eqnarray*}
\hd  F \ = \  \inf \Bigg\{ \  \alpha>0 &:&  \text{for all $\varepsilon>0$ there exists a cover $\{U_i\}$ of $F$} \\
&\,& \hspace{28mm}  \text{ such that  $\sum_i |U_i|^\alpha < \varepsilon$ } \Bigg\}.
\end{eqnarray*}
The key difference here is that sets with vastly different diameters  are permitted in the covers and their contribution to the `dimension' is weighted according to their diameter, see Figure \ref{fig:hd}.  In particular, it is easily seen that the Hausdorff dimension is countably stable.  Both the Hausdorff and box dimension measure the size of the whole set, giving rise to an ``average dimension''. It is often the case that more extremal information is required, for example in embedding theory, see \cite{robinson}.  The \emph{Assouad dimension} is designed to capture this information and is defined, for any set $F \subseteq \mathbb{R}^d$, by
\begin{eqnarray*}
\dim_\text{A} F \ = \  \inf \Bigg\{ \  \alpha>0 &:& \text{     there exists a constant $C >0$ such that,} \\
&\,& \hspace{10mm}  \text{for all $0<r<R $ and $x \in F$ we have } \\ 
&\,&\hspace{20mm}  \text{$ N_r\big( B(x,R) \cap F \big) \ \leq \ C \bigg(\frac{R}{r}\bigg)^\alpha$ } \Bigg\}.
\end{eqnarray*}
The key point here is that one does not seek covers of the whole space, but only a small ball, and  the expected covering number  is appropriately normalised, see Figure \ref{fig:ad}.   One of the joys of dimension theory is in understanding how these different notions of dimension relate to each other and how they behave in different settings.  It is a simple exercise to demonstrate that
\[
\hd F \leq  \lbd F \leq \ubd F \leq \ad F
\]
for any bounded $F \subseteq \mathbb{R}^d$, and that these inequalities can be strict inequalities or equalities in any combination. Equality throughout can be interpreted as a manifestation of  `strong homogeneity'. For example, if $F$ is Ahlfors-David regular then $\hd F = \bd F = \ad F$.  

There are of course many other notions of dimension, each important in its own right and motivated by particular questions or applications.  We omit discussion of these, but other examples include the packing, lower, quasi-Assouad, modified box, topological, Fourier, among many others.  We refer the reader to \cite{bishopperes, techniques, falconer, mattila, robinson} for more background on dimension theory, including a  thorough investigation of the  basic properties of the various notions of dimension.

\begin{figure}[H]
\centering
		\includegraphics[width=46mm]{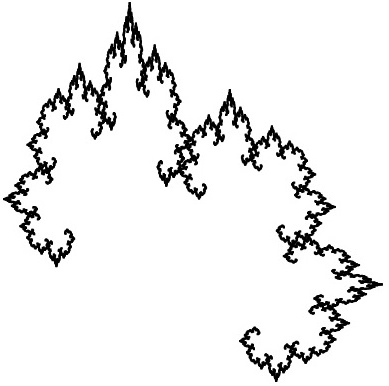}
\includegraphics[width=44mm]{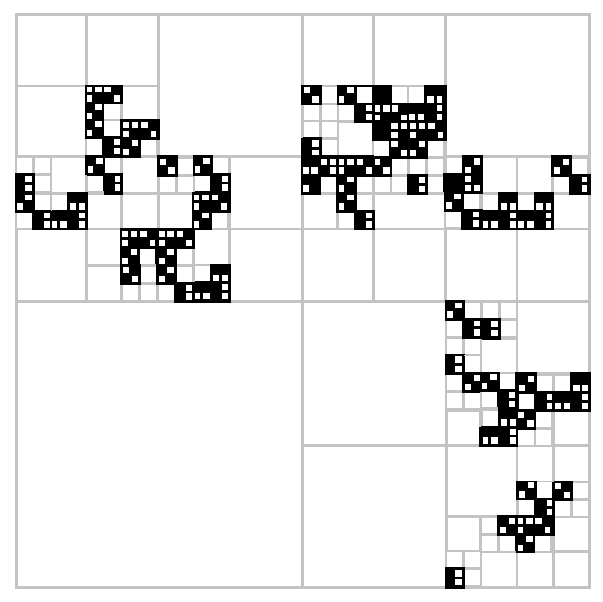}
\includegraphics[width=41mm]{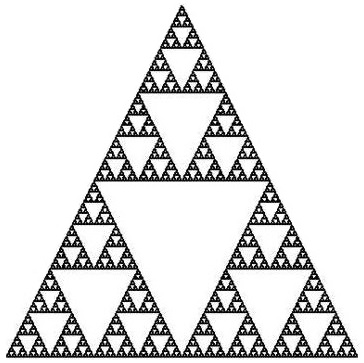}
\caption{Three fractals: a self-affine set  (left), a random set generated by \emph{Mandelbrot percolation} (centre), and the self-similar \emph{Sierpi\'nski triangle}   (right).}
\label{fractfig}
\end{figure}

\begin{figure}[H]
\includegraphics[width=50mm]{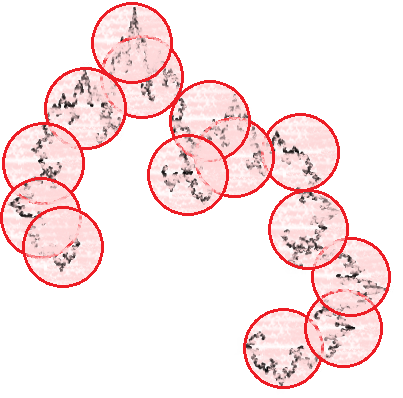}
\includegraphics[width=50mm]{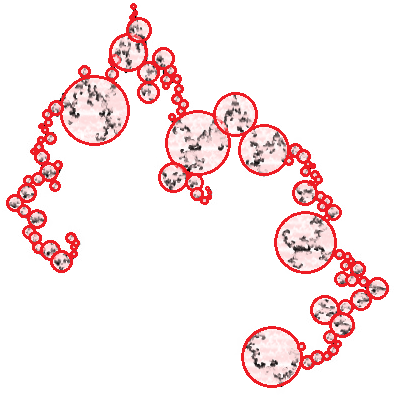}
\includegraphics[width=50mm]{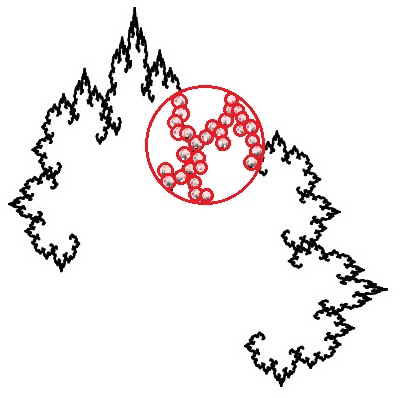}
\caption{Left: an efficient covering of  the self-affine set from Figure \ref{fractfig} by balls of the same radius.  Counting the number of balls required for such a cover as the radius tends to 0 gives rise to the \emph{box dimension}. Centre: an efficient covering of  the same set by balls of arbitrarily varying radii.  Understanding the weighted sum of diameters of the sets in such a cover gives rise to the \emph{Hausdorff dimension}. Right: an efficient covering of  the same set by smaller  balls of the same radius.  Counting the number of balls required for such a cover, optimised over all larger balls and all pairs of scales, gives   rise to the \emph{Assouad dimension}.}
\label{fig:bd}     \label{fig:hd}  \label{fig:ad} 
\end{figure}

The main purpose of this article is to motivate a new perspective in dimension theory.  Rather than view these notions of dimension in isolation, we should try to view them as different facets of the same object. This approach will  give rise to a continuum of dimensions, which fully describes the scaling structure of the space, both locally and globally.  Moreover, this will yield a more nuanced understanding of the individual notions of dimensions as well as insight into the somewhat philosophical question of how to define dimension itself.  This sounds rather grand and  ambitious, but by focusing our attention slightly and applying this philosophy in particular settings, an interesting and workable theory has started to emerge.   More concretely, given dimensions $\dim$ and $\textup{Dim}$ which generally satisfy $\dim F \leq \textup{Dim} \, F$, we wish to introduce  a parameterised  family of dimensions $d_\theta$, with parameter $\theta \in [0,1]$, which (ideally) satisfies:  \newpage
\begin{itemize}
\item $d_0  = \dim$
\item $d_1  = \textup{Dim} $
\item $\dim F \leq d_\theta(F) \leq \textup{Dim} \, F, $ for all $ \theta \in (0,1)$  and all reasonable sets $F$
\item for a given $F$, $d_\theta(F)$ varies  continuously in $\theta$.
  \end{itemize}
Moreover,
\begin{itemize}
\item the definition of $d_\theta$ should be natural, sharing the philosophies of both $\dim$ and $ \textup{Dim}$
\item $d_\theta$ should give rise to a rich and workable theory.
\end{itemize}
The most important of these points are the final two.  One can achieve the first four in any number of trivial and meaningless ways, but the key idea is that the function $\theta \mapsto d_\theta(F)$ should be ripe with easily interpreted, meaningful, and nuanced information regarding the set $F$.    If this can be achieved then the rewards are likely to include:
\begin{itemize}
\item a better understanding of  $\dim$ and $\textup{Dim}$   
\item an explanation of one type of behaviour changing into another   
\item more information, leading to better applications   
\item a (large) new set of questions  
\item fun.
\end{itemize}
In the following subsections we describe two concrete examples of this philosophy in action.

\subsection{The Assouad spectrum}
\label{assspec}

The \emph{Assouad spectrum}, introduced by Fraser and Yu in 2016 \cite{Spectraa}, aims to interpolate between the upper box dimension and the Assouad dimension.  The parameter  $\theta \in (0,1)$ serves to fix the relationship between the  two scales $r<R$ used to define the Assouad dimension, by setting $R=r^{\theta}$.  As such, the Assouad spectrum  of $F \subseteq \mathbb{R}^d$ is defined by
\begin{eqnarray*}
\as F \ = \  \inf \Bigg\{ \  \alpha>0 &:& \text{     there exists a constant $C >0$ such that,} \\
&\,& \hspace{10mm}  \text{for all $0<r<1 $ and $x \in F$ we have } \\ 
&\,&\hspace{20mm}  \text{$ N_{r}\big( B\big(x,r^{\theta}\big) \cap F \big) \ \leq \ C \bigg(\frac{r^{\theta}}{r}\bigg)^\alpha$ } \Bigg\}.
\end{eqnarray*}
At this point it might seem equally natural to bound the two scales away from each other by considering all $0<r \leq  R^{1/\theta}$ rather than fixing $r=R^{1/\theta}$.  Rather than go into details here, we simply observe that fixing the relationship between the scales is both easier to work with and provides strictly more information than the alternative, see \cite{canadian}. We also note that  in \cite{Spectraa} the scales were denoted by $R^{1/\theta}$ and $R$, rather than $r$ and $r^\theta$. These two formulations are clearly equivalent but the notation we use here seems a little less cumbersome, however, in certain situations it is more natural  to use $R^{1/\theta}$ and $R$.   It was established  in \cite{Spectraa} that $\as F $ is:
\begin{itemize}
\item continuous in $\theta \in (0,1)$, see \cite[Corollary 3.5]{Spectraa}   
\item Lipschitz on any closed subinterval of $(0,1)$, see \cite[Corollary 3.5]{Spectraa}  
\item not necessarily monotonic   (but often is), see \cite[Proposition 3.7 and Section 8]{Spectraa}.
\end{itemize}
Moreover,  we have the following general bounds, adapted from \cite[Proposition 3.1]{Spectraa}.
\begin{lemma} \label{standardbounds}
For any bounded set $F \subseteq \mathbb{R}^d$, 
\[
\ubd F \leq \as F \leq \min \left\{ \frac{\ubd F}{1-\theta}, \ \ad F \right\} .
\]
\end{lemma}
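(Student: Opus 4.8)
The plan is to establish the three inequalities that make up the statement separately. The two upper bounds $\as F \leq \ad F$ and $\as F \leq \ubd F/(1-\theta)$ will follow essentially immediately from the definitions, once one notices that for $0<r<1$ the pair $(r,r^\theta)$ is a legitimate pair of scales with $r<r^\theta\leq 1$. If $\alpha > \ad F$, then applying the defining inequality for the Assouad dimension with $R = r^\theta$ shows that $\alpha$ is admissible in the definition of $\as F$; letting $\alpha \downarrow \ad F$ gives $\as F \leq \ad F$. If $\alpha > \ubd F$, then (after enlarging the constant to absorb the coarse scales $r$ bounded away from $0$) we have $N_r(F) \leq C r^{-\alpha}$ for all $0<r<1$, whence
\[
N_r\big(B(x,r^\theta)\cap F\big)\ \leq\ N_r(F)\ \leq\ C r^{-\alpha}\ =\ C\Big(\tfrac{r^\theta}{r}\Big)^{\alpha/(1-\theta)},
\]
so the exponent $\alpha/(1-\theta)$ is admissible for $\as F$; letting $\alpha \downarrow \ubd F$ gives $\as F \leq \ubd F/(1-\theta)$.

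The lower bound $\ubd F \leq \as F$ is the real content, and the approach is an iteration argument. Fix $\alpha > \as F$, so there is a constant $C$ with $N_r(B(x,r^\theta)\cap F) \leq C r^{-(1-\theta)\alpha}$ for all $0<r<1$ and $x \in F$. Covering $F$ first by a bounded-overlap family of at most a constant (depending only on $d$) times $N_{r^\theta}(F)$ balls of radius $r^\theta$ centred in $F$, and then covering each such ball using the displayed local bound, yields a one-step recursion
\[
N_r(F)\ \leq\ C'\, N_{r^\theta}(F)\, r^{-(1-\theta)\alpha},
\]
where $C'$ depends only on $d$ and $C$. Iterating this $n$ times, replacing $r$ successively by $r^\theta, r^{\theta^2}, \dots$, and summing the geometric series $1+\theta+\cdots+\theta^{n-1} = (1-\theta^n)/(1-\theta)$ in the exponent gives $N_r(F) \leq C'^{\,n}\, N_{r^{\theta^n}}(F)\, r^{-\alpha(1-\theta^n)}$ for every $n\geq 1$.

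To turn this into a bound on $\ubd F$, the idea is that for each small scale $r$ one should choose the number of iterations $n = n(r)$ so that $r^{\theta^n}$ lands in a fixed compact subinterval of $(0,1)$; this is possible because consecutive values $r^{\theta^n}$ and $r^{\theta^{n+1}}$ differ only by a bounded multiplicative factor, and it forces $n(r) = O(\log\log(1/r))$. With this choice, $N_{r^{\theta^n}}(F)$ and the factor $r^{\alpha\theta^n}$ are both bounded by constants, so $N_r(F) \leq \mathrm{const}\cdot(\log(1/r))^{\kappa}\, r^{-\alpha}$ with $\kappa = \log C'/\log(1/\theta)$; dividing by $-\log r$ and letting $r\to 0$ gives $\ubd F \leq \alpha$, and then $\alpha \downarrow \as F$ finishes the proof.

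The main obstacle is precisely the book-keeping in this final step: one must check that the accumulated constant $C'^{\,n(r)}$ is subpolynomial in $1/r$ (hence invisible to the box dimension) and, more importantly, that $n(r)$ can be chosen so the estimate holds uniformly for \emph{all} small $r$ and not merely along a geometric subsequence of scales — if one only controls a geometric subsequence, the crude interpolation between consecutive scales loses a factor of $\theta$ and yields only the weaker bound $\ubd F \leq \as F/\theta$.
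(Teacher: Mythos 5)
Your two upper bounds are proved exactly as in the paper. Your lower bound, however, takes a genuinely different route. The paper argues in one step: it writes $N_r(F)\leq N_{r^\theta}(F)\,\sup_{x\in F}N_r(B(x,r^\theta)\cap F)$ and then invokes the $\limsup$ in the definition of $\ubd F$ to produce arbitrarily small $r$ with $N_r(F)\geq r^{-t}$ (for $t<\ubd F$) while $N_{r^\theta}(F)\leq r^{-s\theta}$ (for $s>\ubd F$), so the quotient forces the local covering number to be at least $(r^\theta/r)^{(t-s\theta)/(1-\theta)}$ along that sequence of scales, giving $\as F\geq (t-s\theta)/(1-\theta)\to\ubd F$. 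You instead run the contrapositive as a bootstrap: assuming $\alpha>\as F$, you iterate the same subdivision inequality $n(r)$ times until the coarse scale $r^{\theta^{n}}$ lands in a fixed compact subinterval of $(0,1)$, and track the accumulated constants. Your bookkeeping is correct — the geometric series in the exponent sums to $1-\theta^n$, the choice of $n(r)=O(\log\log(1/r))$ is possible because consecutive scales $r^{\theta^n}$ differ by a bounded power, and $C'^{\,n(r)}$ is only polylogarithmic in $1/r$, hence invisible to the box dimension — and you rightly flag that the estimate must hold for all small $r$, not just along a subsequence (though note the paper's argument does not need this: since $\ubd F$ is itself a $\limsup$, a lower bound for $\as F$ only requires witnessing scales along a sequence). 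The paper's proof is shorter; yours costs more bookkeeping but delivers strictly more, namely the uniform covering estimate $N_r(F)\lesssim (\log(1/r))^{\kappa}r^{-\alpha}$ at every scale, which is the kind of quantitative statement the one-step $\limsup$ argument cannot produce.
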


\begin{proof}
Let $s> \ubd F$, $x \in F$ and $r \in (0,1)$.  By definition there exists $C>0$ depending only on $s$ such that
\[
N_{r}\big(B\big(x,r^\theta\big)\cap F\big)\leq N_{r}(F ) \leq C  r^{-s} = C \left( \frac{r^\theta}{r} \right)^{s/ (1-\theta)}
\]
which implies  $\as F \leq s/(1-\theta)$ and since $s> \ubd F$ was arbitrary, the upper bound follows, noting that $\as F \leq \ad F$ is trivial.  

For the lower bound, we may assume $\ubd F>0$ and let $0 < t < \ubd F<s$.  Covering $F$ with $r^\theta$-balls and then covering each of these $r^\theta$-balls with $r$-balls, we obtain
\[
N_{r}(F) \leq N_{r^\theta}(F) \,  \left( \sup_{x\in F}N_{r}\big(B\big(x,r^\theta\big)\cap F\big) \right).
\]
Again,  by definition, there exist  arbitrarily small $r>0$ such that
\[
\sup_{x\in F}N_{r}\big(B\big(x,r^\theta\big)\cap F\big) \geq  \frac{N_{r}(F)}{N_{r^\theta}(F) } \geq \frac{r^{-t}}{r^{-s\theta} } =  \left( \frac{r^\theta}{r} \right)^{\frac{s\theta-t}{\theta-1}}
\]
which establishes $\as F \geq \frac{t-s\theta}{1-\theta}$ and, since $s$ and $t$ can be made arbitrarily close to $\ubd F$, the lower bound follows. 
\end{proof}

A useful consequence of Lemma \ref{standardbounds} is that  $\as F \to \ubd F$ as $\theta \to 0$ for any bounded $F$.  However,    $\as F$ may \emph{not} approach $\ad F$ as $\theta \to 1$.  In fact, it was proved in \cite{canadian} that  $\as F \to \qad F$ as $\theta \to 1$, where $\qad F$ is the \emph{quasi}-Assouad dimension.  In many cases the quasi-Assouad dimension and Assouad dimension coincide and so the intended interpolation is achieved. Moreover, the appearance of Assouad dimension in Lemma \ref{standardbounds} may be replaced by the quasi-Assouad dimension.

  Generally, one has $\qad F \leq \ad F$ and if this inequality is strict, then the intended interpolation is not achieved.  However, an approach for ``recovering'' the interpolation was outlined in \cite{Spectraa}.   Let $\phi : [0,1] \to [0,1]$ be an increasing continuous function such that $\phi(R) \leq R$ for all $R \in [0,1]$.  The $\phi$-\emph{Assouad dimension}, introduced in \cite{Spectraa}, is defined by \newpage
\begin{eqnarray*}
\ad^\phi F \ = \  \inf \Bigg\{ \  \alpha>0 &:& \text{     there exists a constant $C >0$ such that,} \\
&\,& \hspace{2mm}  \text{for all $0<r\leq \phi(R) \leq R \leq 1 $ and $x \in F$ we have } \\ 
&\,&\hspace{29mm}  \text{$ N_{r}\big( B(x,R) \cap F \big) \ \leq \ C \bigg(\frac{R}{r}\bigg)^\alpha$ } \Bigg\}.
\end{eqnarray*}
The goal is now to identify  precise conditions on $\phi$ which guarantee  $\ad^\phi F = \ad F $. Resolution of this problem for a particular $F$ gives precise information on how the Assouad dimension of $F$ can be \emph{witnessed} and, moreover, completes the interpolation between the upper box and Assouad dimension in a precise sense.  Often $\as F = \ad F$ for some $\theta \in (0,1)$, in which case the threshold for witnessing the Assoaud dimension is provided by the function $\phi(R) = R^{1/\theta}$.    The $\phi$-\emph{Assouad dimension} has been considered in detail by Garc\'ia, Hare, and Mendivil  \cite{Hare2, Hare3} and Troscheit  \cite{ sascha}.

Various other dimension spectra are introduced in \cite{Spectraa}, including the \emph{lower spectrum}, which is the natural dual to the Assouad spectrum and lives in between the lower dimension and the lower box dimension.  This has been investigated, in conjunction with the Assouad spectrum, by Chen, Wu and Chang \cite{chen1, chen2}, Hare and Troscheit \cite{Hare4} and Fraser and Yu \cite{Spectrab}.

\subsection{Intermediate dimensions}
\label{intdim}

The \emph{intermediate dimensions}, introduced by Falconer, Fraser and Kempton in 2018 \cite{FalconerFraserKempton},  aim to interpolate between the Hausdorff and  box dimensions.  The parameter  $\theta \in (0,1)$ serves to restrict the discrepancy between the size of covering sets in the definition of the Hausdorff dimension by insisting that $|U_i| \leq |U_j|^\theta$ for all $i,j$.  As such, the $\theta$-intermediate dimensions of a bounded set $F \subseteq \mathbb{R}^d$ are defined by
\begin{eqnarray*}
\dim_\theta  F \ = \  \inf \Bigg\{ \  \alpha>0 &:&  \text{for all $\varepsilon>0$ there exists a cover $\{U_i\}$ of $F$ } \\
&\,& \hspace{0mm}  \text{with $|U_i| \leq |U_j|^\theta$ for all $i,j$  such that  $\sum_i |U_i|^\alpha < \varepsilon$ } \Bigg\}.
\end{eqnarray*}
In fact, \cite{FalconerFraserKempton} considers upper and lower intermediate dimensions, but we restrict our attention here to the lower version. It was proved in \cite{FalconerFraserKempton} that    $\dim_\theta  F$  is:
\begin{itemize}
\item   continuous in $\theta \in (0,1)$, see    \cite[Proposition 2.1]{FalconerFraserKempton}
\item monotonically increasing  
\item  bounded between the Hausdorff and lower box dimension, that is, for bounded $F$
\[
\hd F \leq \dim_\theta  F \leq  \lbd F  
\] \vspace{-7mm}
\item and satisfies appropriate versions of the \emph{mass distribution principle} and \emph{Frostman's lemma}, see  \cite[Propositions 2.2-2.3]{FalconerFraserKempton}.
\end{itemize}
Next we establish general lower bounds for the intermediate dimensions which involve the   Assouad dimension, see  \cite[Proposition 2.4]{FalconerFraserKempton}. In the proof we rely on the following mass distribution principle, first proved in \cite[Proposition 2.2]{FalconerFraserKempton}. The main difference between Lemma \ref{mdp} and the usual mass distribution principle, see \cite[4.2]{falconer}, is that a family of measures $\{\mu_r\}$ is used instead of a single measure.

\begin{lemma}\label{mdp}
Let $F$ be a Borel subset of $\mathbb{R}^d$,   $0\leq \theta \leq 1$ and $s\geq 0$. Suppose that there are numbers $a, c, r_0 >0$ such that for all $0< r\leq r_0$  we can find a Borel measure $\mu_r$ supported by $F$ with
$\mu_r (F) \geq a $, such that 
\begin{equation}\label{mdiscond}
\mu_r (U) \leq c|U|^s
\end{equation}
for all Borel sets  $U \subseteq \mathbb{R}^d $ with $r \leq |U|\leq r^\theta$. Then  $\dim_\theta F \geq s$. 
 \end{lemma}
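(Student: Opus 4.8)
The plan is to prove first the quantitative statement that $\sum_i|U_i|^s\geq a/c$ for every admissible cover $\{U_i\}$ of $F$ by sets of diameter at most $r_0$, where ``admissible'' means $|U_i|\leq|U_j|^\theta$ for all $i,j$ as in the definition of $\dim_\theta$. Granting this, $\dim_\theta F\geq s$ follows quickly: if $\alpha<s$ and $\{U_i\}$ is any admissible cover with $\sum_i|U_i|^\alpha<\varepsilon$ for $\varepsilon>0$ sufficiently small, then each $|U_i|\leq\varepsilon^{1/\alpha}\leq\min\{r_0,1\}$, whence $|U_i|^\alpha\geq|U_i|^s$ and therefore $\sum_i|U_i|^\alpha\geq\sum_i|U_i|^s\geq a/c$; choosing $\varepsilon<a/c$ yields a contradiction, so no $\alpha<s$ lies in the set over which the infimum defining $\dim_\theta F$ is taken.

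For the main estimate, the key point is that the admissibility condition locks every covering set into a single scale window that is matched to one of the hypothesised measures. Writing $r=\inf_i|U_i|$, admissibility gives $\sup_i|U_i|\leq\inf_j|U_j|^\theta=(\inf_j|U_j|)^\theta=r^\theta$, so that $r\leq|U_i|\leq r^\theta$ for every $i$; since all diameters are at most $r_0$ we also have $r\leq r_0$. The hypothesis then supplies a Borel measure $\mu_r$ supported on $F$ with $\mu_r(F)\geq a$ and $\mu_r(U)\leq c|U|^s$ whenever $r\leq|U|\leq r^\theta$, and, after replacing each $U_i$ by its closure (which changes neither its diameter nor the covering property and makes it Borel), this bound applies to every $U_i$. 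Countable subadditivity of $\mu_r$ now gives
\[
a\ \leq\ \mu_r(F)\ \leq\ \sum_i\mu_r(U_i)\ \leq\ c\sum_i|U_i|^s,
\]
which is exactly the inequality we wanted.

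A few routine points close the argument. If $s=0$ the conclusion is vacuous, so we may assume $s>0$; then $\mu_r(\{x\})\leq c|\{x\}|^s=0$ shows each $\mu_r$ is non-atomic and $F$ is uncountable, and consequently for $\theta\in(0,1]$ the relation $\sup_i|U_i|\leq(\inf_j|U_j|)^\theta$ forces $r=\inf_i|U_i|>0$ (otherwise every covering set would be a singleton, which cannot cover the uncountable set $F$), so $r$ is a legitimate scale at which to invoke the hypothesis. The only genuine obstacle is conceptual rather than technical: one must notice that the diameter constraint built into $\dim_\theta$ confines each admissible cover to the window $[r,r^\theta]$, which is precisely the regime on which a single member $\mu_r$ of the family is required to behave well; once this matching is identified, the proof reduces to the same one-line mass-counting estimate as in the classical mass distribution principle, with the family $\{\mu_r\}$ standing in for the single measure used there.
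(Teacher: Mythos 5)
The core of your argument --- replace the $U_i$ by their closures, match the cover to the single measure $\mu_{r}$ with $r=\inf_i|U_i|$, and run the chain $a\leq\mu_r(F)\leq\sum_i\mu_r(U_i)\leq c\sum_i|U_i|^s$ --- is exactly the paper's proof, and your reduction from this estimate to $\dim_\theta F\geq s$ is fine. The genuine problem is the step where you dispose of the degenerate case $\inf_i|U_i|=0$. The hypothesis \eqref{mdiscond} bounds $\mu_r(U)$ only for Borel sets with $r\leq|U|\leq r^\theta$; a singleton has diameter $0<r$, so you cannot conclude $\mu_r(\{x\})\leq c\cdot 0^s=0$. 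Indeed the measures $\mu_r$ may be purely atomic with atoms of mass up to $cr^s$, and countable sets can satisfy the hypothesis with $s>0$: the set $F_p=\{n^{-p}\}$ of Section \ref{countable} is handled in \cite{FalconerFraserKempton} by precisely such finitely-atomic measures, yet it is covered by countably many singletons. So your claim that $F$ must be uncountable is false, and it cannot be repaired: under the literal reading of the definition of $\dim_\theta$ given in this survey, the all-singleton cover of a countable $F$ satisfies $|U_i|\leq|U_j|^\theta$ and gives $\sum_i|U_i|^\alpha=0$, which would make the lemma itself fail for such $F$.

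The resolution is that the survey's displayed definition is an informal version of the one in \cite{FalconerFraserKempton}, where admissible covers are required to have diameters in a window $[\delta^{1/\theta},\delta]$ with $\delta>0$ (in particular all diameters are strictly positive); the paper's proof works directly with covers satisfying $r\leq|U_i|\leq r^\theta$ and so never meets your degenerate case. Under that reading your scale-matching observation $\sup_i|U_i|\leq(\inf_j|U_j|)^\theta$ and the rest of your argument go through verbatim and coincide with the paper's; the non-atomicity paragraph should simply be deleted rather than fixed, since the case it addresses is excluded by the correct definition, not by the hypotheses on $\{\mu_r\}$.
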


\begin{proof}
Let $\{U_i\}$ be a cover of $F$ such that $r \leq |U_i|\leq r^\theta$ for all $i$ and some $r\leq r_0$. We may clearly assume the $U_i$ are Borel (even closed). Then
\[
a\ \leq \ \mu_r(F) \ \leq\  \mu_r\Big(\bigcup_i U_i\Big)\ \leq \ \sum_i \mu_r(U_i) \ \leq \  c\sum_i |U_i|^s,
\]
so that $\sum_i |U_i|^s \geq a/c>0$ for every admissible cover (by sets with sufficiently small diameters) and therefore $\dim_\theta F \geq s$. 
\end{proof}

\begin{lemma} \label{generalboundaa}
For bounded $F \subseteq \mathbb{R}^d$ and $\theta \in (0,1)$, we have
\[
\dim_\theta  F \geq \ad  F - \frac{\ad F -  \lbd F}{\theta}.
\]
\end{lemma}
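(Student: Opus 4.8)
The plan is to apply the mass distribution principle of Lemma~\ref{mdp} to a family of measures $\{\mu_r\}$ manufactured from maximal $r$-separated subsets of $F$. Write $A = \ad F$, $B = \lbd F$ and $s_0 = A - (A-B)/\theta$. Since $\dim_\theta F \geq 0$ always, we may assume $s_0 > 0$; and since passing to the closure of $F$ alters none of the three quantities under discussion, we may assume $F$ is compact so that Lemma~\ref{mdp} is applicable. It then suffices to establish $\dim_\theta F \geq s$ for an arbitrary $s \in (0, s_0)$ and let $s \uparrow s_0$.

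Having fixed $s$, I would choose $\varepsilon > 0$ small enough that $\varepsilon(2-\theta) \leq \theta(s_0 - s)$. By the definition of the Assouad dimension there is a constant $C$ (depending on $\varepsilon$) with $N_\rho(B(x,R)\cap F) \leq C(R/\rho)^{A+\varepsilon}$ for all $0 < \rho < R$ and $x \in F$, and by the definition of the lower box dimension there is $r_0 \in (0,1]$ with $N_r(F) \geq r^{-(B-\varepsilon)}$ for every $0 < r \leq r_0$. For each such $r$, take a maximal $r$-separated set $\{x_i\}_{i=1}^M \subseteq F$ and set $\mu_r = M^{-1} \sum_{i=1}^M \delta_{x_i}$; this is a Borel measure supported by $F$ with $\mu_r(F) = 1$. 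By maximality the balls $B(x_i, r)$ cover $F$, and the standard comparison between covering and packing numbers then gives $M \geq N_r(F)/K_d \geq r^{-(B-\varepsilon)}/K_d$ for a constant $K_d$ depending only on $d$.

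The crux is to verify condition~\eqref{mdiscond}, namely $\mu_r(U) \leq c |U|^s$ for all Borel $U$ with $r \leq |U| \leq r^\theta$ and a constant $c$ independent of $r$ and $U$. This is immediate unless $U$ contains some $x_{i_0}$, in which case $U \subseteq B(x_{i_0}, 2|U|)$; because the $x_i$ are $r$-separated, each member of a cover of $B(x_{i_0},2|U|)\cap F$ by sets of diameter $r/2$ contains at most one $x_i$, so the number of $x_i$ lying in $U$ is at most $N_{r/2}(B(x_{i_0},2|U|)\cap F) \leq C(4|U|/r)^{A+\varepsilon}$ by the Assouad estimate applied at the scales $R = 2|U|$ and $\rho = r/2$ (legitimate since $r \leq |U|$). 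Dividing by $M$, and using $|U|^{A+\varepsilon} = |U|^{s}\,|U|^{A+\varepsilon-s} \leq |U|^{s}\,r^{\theta(A+\varepsilon-s)}$ (valid as $|U| \leq r^\theta$ and $A+\varepsilon - s > 0$), one is led to
\[
\mu_r(U) \ \leq \ C K_d 4^{A+\varepsilon}\, |U|^s\, r^{\,\theta(A+\varepsilon-s) + B - A - 2\varepsilon}.
\]
A short calculation, using $\theta s_0 = \theta A - A + B$, shows that the exponent of $r$ equals $\theta(s_0 - s) - \varepsilon(2-\theta)$, which is non-negative by the choice of $\varepsilon$; since $r \leq r_0 \leq 1$ the power of $r$ is therefore at most $1$, giving $\mu_r(U) \leq c|U|^s$ with $c = C K_d 4^{A+\varepsilon}$. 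Lemma~\ref{mdp} then yields $\dim_\theta F \geq s$, and letting $s \uparrow s_0$ finishes the argument.

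I expect the only real obstacle to be the middle of this chain: converting the Assouad-dimension inequality into a bound on how many $r$-separated points of $F$ can lie in a set of diameter between $r$ and $r^\theta$. The decisive choices are to invoke the Assouad estimate at scales $R \asymp |U|$ and $\rho \asymp r$, and to compare the cardinality $M$ of the separated set with $N_r(F)$; once those two inputs are secured the remainder is bookkeeping among the three scales $r$, $|U|$, $r^\theta$ together with the check that the resulting power of $r$ has a non-negative exponent.
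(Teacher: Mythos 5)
Your proof is correct and follows essentially the same route as the paper's: apply the mass distribution principle (Lemma~\ref{mdp}) to uniform measures on $r$-separated subsets of $F$ of cardinality roughly $r^{-\lbd F}$, and use the Assouad dimension at scales $R\asymp |U|$, $\rho\asymp r$ to bound the number of separated points a set $U$ with $r\le |U|\le r^\theta$ can meet. The only differences are cosmetic: you track the constants and $\varepsilon$'s explicitly where the paper works with auxiliary exponents $s<\lbd F\le \ad F<t$ and lets them converge at the end.
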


\begin{proof}
Fix $\theta \in (0,1)$ and assume that $\underline{\dim}_\textup{B} F >0$, since otherwise there is nothing to prove.  Let
\[
0<s< \underline{\dim}_\textup{B} F  \leq  \dim_\textup{A} F < t < \infty
\]
and $r \in (0,1)$ be given.  Since $s< \underline{\dim}_\textup{B} F$, there exists a  constant $C_0$ such that there is an $r$-separated set of points in $F$ of cardinality at least $C_0 r^{-s}$.  Let $\mu_r$ be a uniformly distributed probability measure supported on this set of points. 

Let $U \subseteq \mathbb{R}^d$ be a Borel set with $|U| = r^\gamma$ for some $\gamma \in [\theta, 1]$.  Since $\dim_\textup{A} F < t$,  there exists a  constant $C_1$  such that $U$ intersects at most $C_1 ( r^\gamma/r)^t$ points in the support of $\mu_r$.  Therefore
\[
\mu_r (U) \leq C_1r^{(\gamma-1)t} C_0^{-1} r^{s} = C_1C_0^{-1} \ |U|^{(\gamma t - t+ s)/ \gamma} \leq  C_1C_0^{-1}  \ |U|^{(\theta t - t+ s)/ \theta},
\]
which, using Lemma \ref{mdp}, implies that
\[
\dim_\theta F \geq (\theta t - t+ s)/ \theta = t - \frac{t-s}{\theta}.
\]
Letting $t \to  \ad F$ and $s \to \underline{\dim}_\textup{B} F$ yields the desired result. 
\end{proof}

It follows from this lemma that $\dim_\theta  F \to \lbd F$ as $\theta \to 1$.  In contrast, it was shown in \cite{FalconerFraserKempton} that $\dim_\theta  F$ does not necessarily approach $\hd F$ as $\theta \to 0$. Indeed, the above lemma shows that if  $\lbd F = \dim_\textup{A} F$, then $\dim_\theta  F  = \lbd F  = \dim_\textup{A} F$ for all $\theta \in (0,1)$.

Lemma \ref{generalboundaa} should be compared with Lemma \ref{standardbounds}.  For example, combining the two results, one sees that if $F \subseteq \mathbb{R}^d$ and either    $\bd F = 0$ or $\bd F  = d$, then both the intermediate dimensions and Assouad spectrum are constant (and equal to $\bd F$).

\section{Examples}
\label{sec:2}

\subsection{Countable sets}
\label{countable}

Fix $p>0$, and let $F_p = \{ n^{-p} \ : \ n \in \mathbb{N}\}$.   It is straightforward to show that
\[
\hd F_p = 0 <  \bd F_p = \frac{1}{1+p} <  \ad F_p = 1.
\]
Moreover, it was shown in \cite[Corollary 6.4]{Spectraa} that
\[
 \as F_p = \min\left\{ \frac{1}{(1+p)(1-\theta)} , \ 1\right\}  
\]
and in \cite[Proposition 3.1]{FalconerFraserKempton}  that
\[
\dim_\theta  F_p = \frac{\theta}{\theta+p},
\]
see Figure \ref{fig:countable}. Therefore these simple examples provide a clear exposition of dimension interpolation in action, noting that genuine continuous interpolation between the dimensions considered is achieved in each case.

\begin{figure}[H]
\centering
		\includegraphics[width=\textwidth]{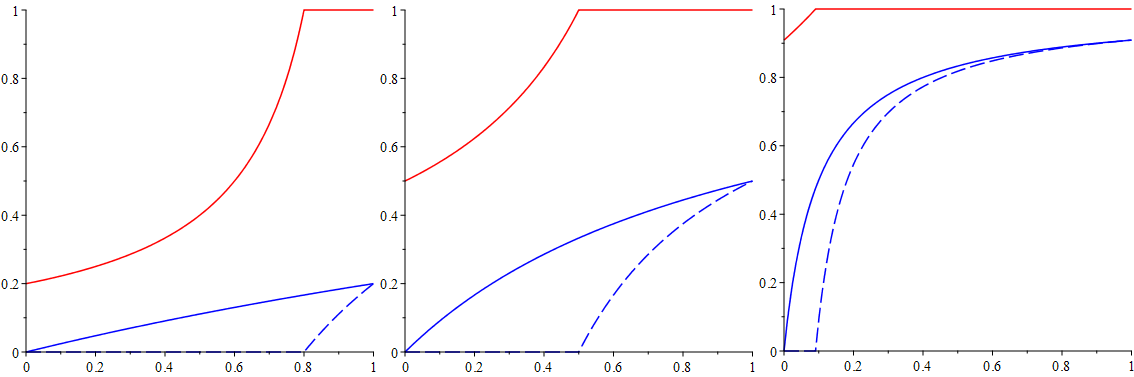}
\caption{Plots of $ \as F_p$ (red) and $\dim_\theta  F_p$ (solid blue) as functions of $\theta$ for different values of $p$.  On the left, $p=4$, in the centre $p=1$, and on the right $p=1/10$.  For reference, the general lower bounds   from Lemma \ref{generalboundaa} for the intermediate dimensions are shown as a dashed blue line.  The general upper bounds from Lemma \ref{standardbounds} for the Assouad spectrum are achieved. }
\label{fig:countable}  
\end{figure}

\subsection{Self-affine sets}
\label{affine}

One of the most natural and important families of set which exhibit distinct Hausdorff, box and Assouad dimensions are the self-affine carpets introduced by Bedford and McMullen \cite{bedford, mcmullen}.  These sets are constructed as follows. Divide the unit square $[0,1]^2$ into an $m \times n$ grid, for integers  $n>m \geq 2$,  and select a collection of $N \geq 2$  rectangles formed by the grid.  Label the rectangles $1, \dots, N$ and, for each rectangle $i$, let $S_i$ denote the  affine map which maps $[0,1]^2$ onto $i$ by first applying the map $(x,y) \mapsto (x/m, y/n)$ and then translating.  The \emph{Bedford-McMullen carpet}   is defined to be the unique non-empty compact set $F$ satisfying
\[
F = \bigcup_{i=1}^N S_i(F),
\]
see Figure \ref{fig:carpetspec}. The fact that this formula defines such a set uniquely is a well-known result in fractal geometry concerning \emph{iterated function systems}, see   \cite[Chapter 9]{falconer} for the details. 

 In order to state known dimension  formulae for $F$, let $M \in [1,m]$ denote the number of distinct columns in the grid containing chosen rectangles $i$,  $C_j \in [1,n]$ denote the number of chosen rectangles in the  $j$th nonempty column for $j\in \{1, \dots, M\}$, and $C_{\max} = \max_j C_j$.  Bedford and McMullen independently computed the box and Hausdorff dimensions of $F$ in 1984   \cite{bedford, mcmullen} and the Assouad dimension was computed by Mackay in 2011 \cite{mackay}.  The respective formulae are
\[
\hd  F \ = \ \frac{\log \sum_{j} C_j^{\log m/ \log n}}{\log m},  
\]
\[
\bd F \ = \    \frac{\log M}{\log m} \, + \, \frac{\log ( N/M)}{\log n},   
\]
and
\[
\ad  F \ = \  \frac{\log M}{\log m} \, + \,  \frac{\log C_{\max}}{\log n} .  
\]
Note that if $C_j<C_{\max}$ for some $j$, then the Hausdorff, box and Assouad dimensions are all distinct.  This is called the \emph{non-uniform fibres} case and is the case of interest.  In fact, in the \emph{uniform} fibres case, the three dimensions coincide.  Therefore, from now on we restrict our attention to the non-uniform fibres setting, where computation of the Assouad spectrum and intermediate dimensions is relevant.  It was recently proved in \cite[Corollary 3.5]{Spectrab} that, for $\theta \in (0, \log m/ \log n]$,
\[
\as F \ = \    \frac{\log M- \theta \log ( N/C_{\max})}{(1-\theta) \log m} \, + \, \frac{\log ( N/M) - \theta \log C_{\max}}{(1-\theta) \log n}  
\]
and for $\theta \in [ \log m/ \log n, 1)$
\[
\as F = \ad F,
\]
see Figure \ref{fig:carpetspec}. In particular, a single phase transition occurs at $\theta = \log m /\log n$, and a short calculation reveals that this is  strictly greater  than
\[
1- \frac{\ubd F}{\ad F}
\]
which is where the single phase transition occurs in the general upper bound from Lemma \ref{standardbounds}.   Therefore, the general upper bound for $\as F$ is never achieved by a Bedford-McMullen carpet in the non-uniform fibres setting.

The intermediate dimensions of $F$ were considered in \cite{FalconerFraserKempton}, where it was established that $\dim_\theta F \to \hd F$ as $\theta \to 0$.  Recall that this `genuine interpolation' is not satisfied for all sets.  A precise formula for $\dim_\theta F$ currently seems out of reach, but the following bounds were established in \cite[Propositions 4.1 and 4.3]{FalconerFraserKempton}, see Figure \ref{fig:carpetspec}.  For $0< \theta < \left( \frac{\log m }{2\log n}\right)^2 $ we have the upper bound
\[
\dim_\theta F \leq \hd F +\frac{2 (\log C_{\max}) \log\left( \frac{\log n }{\log m}\right) }{-(\log n) \log \theta},
\]
which importantly establishes $\dim_\theta F \to \hd F$ as $\theta \to 0$, but only improves on the trivial bound of $\dim_\theta F  \leq \bd F$ for very small values of $\theta$.  For example, for the carpet considered in Figure \ref{fig:carpetspec} this improvement is only achieved for $\theta$  smaller than around $10^{-13}$.  Also, for all $\theta \in (0,1)$ we have the lower bound
\[
\dim_\theta F \geq  \hd F + \theta \frac{\log N - h}{\log m},
\]
where
\[
h = -m^{-\hd F} \sum_j C_j^{\log m/ \log n} \left(\left(\frac{\log m }{\log n}-1\right) \log C_j -  \hd F \log m\right) 
\]
is the entropy of the McMullen measure. A short calculation shows that $0 < h \leq \log N$ with $h = \log N$ if and only if $F$ has uniform fibres.  Therefore, in the non-uniform fibres case we have $\hd F< \dim_\theta F$ for all $\theta \in (0,1)$.  This lower bound improves on the general lower bound from Lemma \ref{generalboundaa} for the carpet considered in Figure \ref{fig:carpetspec} for $\theta \leq 0.96$. In the absence of a precise formula, we  ask the following questions.
\begin{question}
For $F$ a Bedford-McMullen carpet with non-uniform fibres, is it true that $ \dim_\theta F < \bd F$ for all $\theta \in (0,1)$?  Moreover, is it true that $\dim_\theta F$ is strictly increasing, differentiable, or analytic?  
\end{question}

\begin{figure}[H]
\centering
		\includegraphics[width=\textwidth]{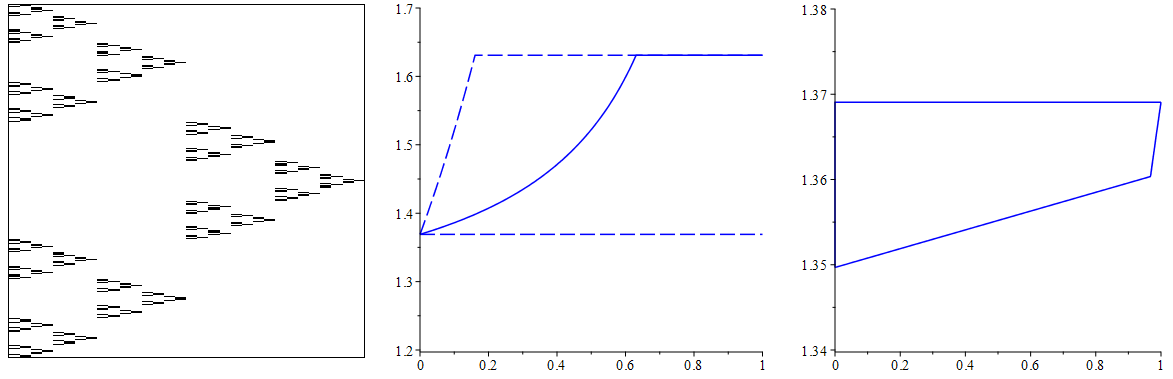}
\caption{Left:  an example of a self-affine carpet $F$ where $n=3$, $m=2$, $N=3$, $M=2$, $C_1=2$, $C_2=1$ and $C_{\max}=2$.  Centre: a plot of $ \as F$  (solid blue) as a function of $\theta$.  For reference, the general upper and lower bounds for the Assouad spectrum from Lemma \ref{standardbounds} are shown as dashed blue lines. Right: plots of the upper and lower bounds for $\dim_\theta F$.  The phase transition in the lower bound comes from the bounds established in \cite{FalconerFraserKempton} switching with the general bounds from Lemma \ref{generalboundaa}.}
\label{fig:carpetspec}  
\end{figure}

\subsection{Self-similar sets and random sets}

The examples discussed so far (the countable sets, and self-affine carpets with non-uniform fibres) are particularly well-suited to the models of interpolation  we discuss in this article.  In particular, the Hausdorff, box, and Assouad dimensions are all distinct, and the intermediate dimensions and Assouad spectrum achieve genuine interpolation between these three dimensions.  Recall that this is not always the case.  Here we discuss two natural families of sets, for which the desired interpolation is not achieved: self-similar sets with overlaps, and Mandelbrot percolation.

We restrict our attention to self-similar sets in $\mathbb{R}$, but interesting questions remain open in higher dimensions.  Let $\{S_i\}_i$ be a finite collection of contracting orientation preserving  similarities  mapping $[0,1]$ into itself.  That is, for each $i$, there are constants $c_i \in (0,1)$ and $t_i \in [0,1-c_i]$ such that $S_i$ is given by $S_i(x) = c_i x +t_i$.  Similar to the setting of self-affine carpets, there exists a unique non-empty compact set $F \subseteq [0,1]$ satisfying
\[
F = \bigcup_i S_i(F).
\]
Such sets $F$ are known as \emph{self-similar}, see   \cite[Chapter 9]{falconer}.  It is  well-known that if there exists a non-empty open set $U \subseteq [0,1]$ such that   $\cup_iS_i(U) \subset U$ and the sets $S_i(U)$ are pairwise disjoint, then
\[
\hd F = \bd F = \ad F = s
\]
where $s \in (0,1]$ is the unique solution to \emph{Hutchinson's formula} $\sum_i c_i^s = 1$.  In particular, this `separation condition', known as the \emph{open set condition} (OSC), guarantees that the pieces $S_i(F)$ do not overlap too much and thus the images of $F$ under  iterates of the defining maps directly  give rise to efficient covers of $F$, facilitating calculation of dimension.  It also guarantees sufficient homogeneity to ensure equality of the three dimensions we discuss.  In particular, self-similar sets satisfying the OSC are not interesting from our dimension interpolation perspective.  However, if the OSC fails, then the Assouad dimension can strictly exceed the box dimension, see \cite{fraserassouad, fraserrob}.  On the other hand, the Hausdorff and box dimension always coincide for self-similar sets, see \cite[Corollary 3.3]{techniques}.  Thus, the natural object to consider here is the Assouad spectrum.  The following  result was proved in \cite[Corollary 4.2]{Spectrab}.

\begin{theorem}
  Let $F \subseteq \mathbb{R}$ be a self-similar set which does not have `super-exponential concentration of cylinders'.  Then for all $\theta \in (0,1)$
\[
\as F = \bd F.
\]
\end{theorem}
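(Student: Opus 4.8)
The plan is to combine the general bounds for the Assouad spectrum from Lemma~\ref{standardbounds} with two structural facts about self-similar sets in $\mathbb{R}$: first, that $\bd F = \hd F$ always holds, and second, that the failure of some separation-type condition is what allows $\ad F > \bd F$. Since Lemma~\ref{standardbounds} already gives $\ubd F \leq \as F$ and $\ubd F = \bd F$ here, the content is entirely in the matching upper bound $\as F \leq \bd F$. So the whole argument reduces to showing that, under the hypothesis of no ``super-exponential concentration of cylinders'', the covering number $N_r\big(B(x,r^\theta)\cap F\big)$ is bounded by a constant times $(r^\theta/r)^{\bd F}$, uniformly in $x\in F$ and $r\in(0,1)$.

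The key steps I would carry out are as follows. Fix $x\in F$ and a scale $r\in(0,1)$, together with a ball $B = B(x,r^\theta)$. The natural move is to pass to the symbolic coding: choose a finite set of cylinders $S_\mathtt{i}(F)$ whose diameters are comparable to $r^\theta$ and which cover $F\cap B$; the number of such cylinders meeting $B$ is controlled by an overlap/multiplicity estimate for similarities on the line, and this is where a Moran-type or finiteness-of-overlaps argument enters. Within each such cylinder $S_\mathtt{i}(F)$, one then needs to count $r$-balls needed to cover it, which by self-similarity is $N_{r/r^\theta}(F)$ up to bounded distortion, i.e. essentially $N_{r^{1-\theta}}(F)$. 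Now the box dimension gives $N_{r^{1-\theta}}(F) \lesssim r^{-(1-\theta)(\bd F + \varepsilon)} = (r^\theta/r)^{\bd F + \varepsilon}$, and so the product over the cylinders meeting $B$ is bounded by $(\text{number of cylinders meeting }B)\cdot (r^\theta/r)^{\bd F+\varepsilon}$. The condition ``no super-exponential concentration of cylinders'' is precisely the hypothesis that makes the number of cylinders of a given generation concentrated in a ball of the corresponding size grow at most sub-exponentially (in the relevant parameter), so that this prefactor does not contribute to the exponent; taking $\varepsilon\to 0$ then yields $\as F \leq \bd F$.

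The main obstacle I anticipate is making the phrase ``super-exponential concentration of cylinders'' do exactly the right work: one must translate it into a quantitative bound on how many generation-$k$ cylinders (with $k$ chosen so that contraction ratios are $\approx r^\theta$) can intersect a single ball of radius $r^\theta$, and verify that this bound, when raised to the relevant power and balanced against the scales, is absorbed into the $o(1)$ error in the exponent as $\theta$ is held fixed and $r\to 0$. A secondary technical point is handling non-uniform contraction ratios $c_i$, so that ``generation $k$'' must be replaced by a stopping-time family of cylinders with diameters in a bounded multiplicative window around $r^\theta$ (and similarly around $r$ inside each cylinder); this is routine but needs care to keep all constants independent of $x$ and $r$. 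Once these are in place, the inequality $\as F \leq \bd F$ follows, and combined with $\bd F = \ubd F \leq \as F$ from Lemma~\ref{standardbounds} the theorem is proved.
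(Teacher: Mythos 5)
The reduction to the upper bound $\as F \leq \bd F$ via Lemma \ref{standardbounds} is fine, but the core of your argument has a genuine gap: the multiplicity estimate you invoke is not available, and it is not what the hypothesis provides. In your covering scheme you bound $N_r\big(B(x,r^\theta)\cap F\big)$ by (number of cylinders of diameter $\approx r^\theta$ meeting $B$) $\times N_{r^{1-\theta}}(F)$, and you need the first factor to be $r^{-o(1)}$. However, ``no super-exponential concentration of cylinders'' (in Shmerkin's sense) only says that distinct cylinders of generation $n$ are not separated by distances decaying faster than exponentially in $n$; it is perfectly compatible with \emph{exponentially many} (in $n$, i.e.\ polynomially many in $r^{-\theta}$) stopping-time cylinders of diameter $\approx r^\theta$ piling into a single ball of radius $r^\theta$. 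Indeed, the interesting case of the theorem is precisely when weak-separation-type conditions fail and $\ad F > \bd F$, and in that regime no Moran-type or finite-multiplicity argument controls the symbolic overcounting; if such an elementary bound held, the symbolic cover would already compute everything and no deep input would be needed.

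For comparison, the paper does not prove this statement by a covering argument at all: it cites \cite[Corollary 4.2]{Spectrab}, whose proof runs through Shmerkin's theorem \cite{pablo} that, absent super-exponential concentration of cylinders, the $L^q$ dimensions of self-similar measures on $\mathbb{R}$ attain their expected values; one then transfers $L^q$-spectrum information (with $q$ large, matched to $\theta$) into uniform bounds on $N_r\big(B(x,r^\theta)\cap F\big)$, the point being that the $r^{o(1)}$ error terms in the $L^q$ estimates are harmless when the two scales are tied by $R = r^\theta$ (though fatal for the Assouad dimension itself). To repair your proposal you would need to replace the claimed multiplicity bound with this measure-theoretic input, or some equivalent of it; as written, the key step fails.
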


In particular, this result implies that genuine interpolation between the box dimension and the Assouad dimension is \emph{not} achieved for these self-similar sets whenever the  Assouad dimension strictly exceeds its box dimension. It remains open whether the conclusion of the above result is true for \emph{all} self-similar sets.  This theorem was proved using a recent result  of Shmerkin \cite{pablo} and we refer the reader to this paper for more details on the `super-exponential concentration' assumption.  We note, however, that this assumption is satisfied if  the  semigroup generated by the defining maps is free (that is, there are no `exact overlaps') and the parameters $t_i$ and $c_i$ defining the maps are algebraic.    

Mandelbrot percolation is a natural random process giving rise to fractals which are \emph{statistically} self-similar, see \cite[Section 15.2]{falconer}.  We begin with the unit cube $M_0 = [0,1]^d$, a fixed integer $m \geq 2$, and a probability $p \in (0,1)$.  At the first step of the construction we divide $M_0$ into $m^d$ (closed) cubes of side length $m^{-1}$ and for each cube we independently choose to `keep it' with probability $p$, or `throw it away' with probability $(1-p)$.  We let $M_1$ be the collection of kept cubes and we then repeat this process inside each kept cube independently, denoting the collection of kept cubes at stage $n$ by $M_n$.  The limit set is then defined by $M = \cap_n M_n$, see Figure \ref{fractfig} for an example with $d=m=2$.  It is well-known that if $p> m^{-d}$, then $M$ is non-empty with positive probability.  Moreover, if we condition on  $M$ being non-empty, then
\[
\hd M = \bd M = d + \frac{\log p }{\log m}  \in (0,d)
\]
almost surely.  It was shown in \cite[Theorem 5.1]{frasermiaotro} that, conditioned on $M$ being non-empty,
\begin{equation} \label{aaa1}
\ad M = d
\end{equation}
almost surely, and therefore it is natural to consider the Assouad spectrum of $M$.  However, it was proved in \cite{Spectrab, Troscheit18, hanphd} that,   conditioned on $M$ being non-empty, almost surely
\begin{equation} \label{aaa}
\as M = \bd M
\end{equation}
for all $\theta \in (0,1)$.  Therefore, again we see that genuine interpolation between the box dimension and Assouad dimension is \emph{not} achieved by the Assouad spectrum for Mandelbrot percolation. However, using the finer analysis introduced in \cite{Spectraa} and discussed in Section \ref{assspec}, it is possible to observe the interpolation by considering $\dim_A^\phi M$ for different functions $\phi$.    Troscheit proved the following dichotomy in \cite{sascha}.

\begin{theorem}
If 
\[
\frac{\log(R/\phi(R))}{\log|\log R|} \to 0
\]
as $R \to 0$, then, conditioned on $M$ being non-empty, almost surely
\[
\dim_\textup{A}^\phi M = d = \ad M.
\]
Moreover, if
\[
\frac{\log(R/\phi(R))}{\log|\log R|} \to \infty
\]
as $R \to 0$, then, conditioned on $M$ being non-empty, almost surely
\[
\dim_\textup{A}^\phi M = \bd M = d + \frac{\log p }{\log m}.
\]
\end{theorem}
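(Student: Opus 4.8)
The plan is to work throughout with the $m$-adic subdivision of $[0,1]^d$ used to construct $M$, writing $\mu=pm^d>1$, and to lean on three standard facts: conditioned on the survival event $S=\{M\neq\emptyset\}$, the martingale $\#M_n/\mu^n$ converges almost surely to a limit that is strictly positive on $S$; we have $\ad M=d$ and $\bd M=\ubd M=d+\log p/\log m=\log\mu/\log m$ almost surely on $S$ (both quoted above); and, given the configuration of the first $n$ levels, the sub-percolations rooted at the distinct retained level-$n$ cubes are independent copies of the original process. Two inequalities are immediate for any admissible $\phi$ and I would dispose of them at once: $\dim_\textup{A}^\phi M\le\ad M\le d$, since restricting the family of admissible scale pairs only lowers the supremum defining the dimension; and $\dim_\textup{A}^\phi M\ge\ubd M$, obtained by fixing a small $R_0$ for which some ball $B(x_0,R_0)$ realises the upper box dimension of $M$ and then letting $r\to0$ through scales $\le\phi(R_0)$. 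Thus in the first case it remains to prove $\dim_\textup{A}^\phi M\ge d$ a.s.\ on $S$, and in the second case $\dim_\textup{A}^\phi M\le\bd M$ a.s.

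In the regime $\log(R/\phi(R))/\log|\log R|\to0$, where $\phi$ barely differs from the identity, I would witness the full dimension $d$ using fully-surviving subtrees. Call a retained level-$a$ cube $Q$ \emph{$k$-good} if every sub-cube of $Q$ down to level $a+k$ is retained and each of the $m^{dk}$ sub-percolations rooted at the level-$(a+k)$ sub-cubes of $Q$ survives; conditional on $Q\in M_a$ this has probability at least $q^{\,m^{dk}}$ for some constant $q=q(p,m,d)\in(0,1)$. If some $Q\in M_a$ is $k$-good, then taking $x$ to be the centre of $Q$ and $R\asymp m^{-a}$ just large enough that $B(x,R)\supseteq Q$ gives $N_{m^{-(a+k)}}(B(x,R)\cap M)\ge c_{m,d}\,m^{dk}$ while $R/m^{-(a+k)}\asymp m^{k}$, so for every $\alpha<d$ this configuration breaks $N_r(B(x,R)\cap M)\le C(R/r)^\alpha$ once $k$ is large, \emph{provided} the pair is admissible, that is $m^{-(a+k)}\le\phi(R)$. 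Read at $R\asymp m^{-a}$, the hypothesis says precisely that $\log_m(R/\phi(R))=o(\log a)$, so the choice $k=k(a)=\lfloor\tfrac{\log a}{2d\log m}\rfloor\to\infty$ keeps the pair admissible for all large $a$ while forcing $m^{dk(a)}\le\sqrt a$. It then remains to show that, a.s.\ on $S$, for all large $a$ some $Q\in M_a$ is $k(a)$-good: conditioning on the first $a$ levels, the number of $k(a)$-good cubes in $M_a$ stochastically dominates a $\mathrm{Binomial}(\#M_a,q^{\,m^{dk(a)}})$ variable, so the conditional probability that none is good is at most $\exp(-\#M_a\,q^{\,m^{dk(a)}})$; since $\#M_a\ge\mu^{a/2}$ for all large $a$ on $S$ and $\mu^{a/2}q^{\,\sqrt a}\to\infty$ super-polynomially, a Borel--Cantelli argument (intersected with the $\mathcal F_a$-measurable events $\{\#M_a\ge\mu^{a/2}\}$) closes this case.

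In the regime $\log(R/\phi(R))/\log|\log R|\to\infty$, where $R/\phi(R)$ grows faster than every power of $|\log R|$, I would instead bound covering numbers uniformly over \emph{all} admissible pairs. Fix $\epsilon>0$. A ball of radius $R\asymp m^{-a}$ meets only boundedly many level-$a$ cubes, and for $Q\in M_a$ the quantity $N_r(Q\cap M)$ is at most a constant times the number $D_Q(a,b)$ of retained level-$b$ cubes inside $Q$ with $m^{-b}\asymp r$; since $\mu=m^{\bd M}$, it therefore suffices to show that a.s.\ there is a random $a_0$ with $D_Q(a,b)\le(\mu m^{\epsilon})^{b-a}$ for all $a\ge a_0$, all $Q\in M_a$ and all admissible $b$, i.e.\ $b-a\ge L_a:=\log_m(m^{-a}/\phi(m^{-a}))$. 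This gives $N_r(B(x,R)\cap M)\lesssim(R/r)^{\bd M+\epsilon}$ at every admissible pair (the bounded range $R\in(R_0,1]$ being handled directly from $\bd M=\ubd M$), hence $\dim_\textup{A}^\phi M\le\bd M+\epsilon$ and then $\le\bd M$ as $\epsilon\to0$. For fixed $Q$, $D_Q(a,a+\ell)$ has the law of $\#M_\ell$ for a fresh percolation, so the crucial ingredient is a \emph{doubly-exponential} upper tail bound, namely \[ \mathbb P\big(\#M_\ell\ge\mu^\ell m^{\epsilon\ell}\big)\le C_\epsilon\exp\!\big(-c_\epsilon\,m^{\epsilon\ell/2}\big) \] for some constants $C_\epsilon,c_\epsilon>0$; this I would derive by iterating the Chernoff bound across levels, using $g(s):=\mathbb E[s^{\#M_1}]=(1-p+ps)^{m^d}\le e^{\mu(s-1)}$ for $s\ge1$, so that $\mathbb E[e^{t\#M_\ell}]=g^{\circ\ell}(e^{t})$ stays bounded once $t$ is taken of size comparable to $(\mu e^{\delta})^{-\ell}$ for a suitably small $\delta=\delta(\epsilon)$, and then optimising the Chernoff estimate at this $t$. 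Because the survival of a level-$a$ cube is independent of its sub-percolation and $\mathbb E[\#M_a]=\mu^a$, the expected number of violating pairs $(Q,b)$ at level $a$ is at most $\mu^a\sum_{\ell\ge L_a}C_\epsilon\exp(-c_\epsilon m^{\epsilon\ell/2})\lesssim\mu^a\exp(-c_\epsilon m^{\epsilon L_a/2})$; read at $R\asymp m^{-a}$, the hypothesis is exactly the statement $L_a/\log a\to\infty$, which forces $m^{\epsilon L_a/2}$ to grow faster than every power of $a$ and in particular to dominate $a\asymp\log_m(\mu^a)$, making the series summable in $a$, so that Borel--Cantelli supplies the required $a_0$.

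The crux, and the step I expect to be the main obstacle, is the doubly-exponential tail bound together with the recognition that the two hypotheses on $\phi$ are calibrated to exactly what it provides. A naive union bound over the $\asymp\mu^a$ surviving level-$a$ cubes, supplied only with the \emph{singly}-exponential decay rate that follows from classical large-deviation theory for supercritical branching processes, would be summable only when $\log(R/\phi(R))$ is at least a fixed multiple of $|\log R|$; that would recover merely the Assouad-spectrum-type regime $\phi(R)=R^{1/\theta}$ and would miss the whole super-polylogarithmic range covered by the theorem. The faster rate --- morally, that a branching process whose offspring is bounded by $m^{d}$ has upper tail at the inflated growth rate $\mu m^{\epsilon}$ decaying like $\exp(-m^{\epsilon\ell})$ rather than $\exp(-\ell)$ --- comes out of the multiplicative inequality $g(s)\le e^{\mu(s-1)}$, and arranging the constants in the exponent to be usable for \emph{every} $\epsilon>0$ is the delicate point. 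A minor secondary issue, routine once one intersects with $\{\#M_a\ge\mu^{a/2}\}$ as above, is the conditioning on $M\neq\emptyset$ in the first regime.
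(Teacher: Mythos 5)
This survey does not actually prove the theorem: it is quoted from Troscheit's paper \cite{sascha}, so there is no in-paper argument to compare against. Judged on its own merits, your proof is correct and follows the same branching-process philosophy as the cited source: for the first regime, the doubly-exponentially small probability $q^{m^{dk}}$ of a fully branching, surviving subtree of depth $k$ is beaten by the exponentially many ($\asymp\mu^a$) independent trials at level $a$ precisely when $k\asymp\log a$, which is exactly the $\log\lvert\log R\rvert$ threshold; for the second regime, the uniform-in-$\ell$ tail bound for $\#M_\ell/\mu^\ell$ obtained from $g(s)\le e^{\mu(s-1)}$ with $s-1\asymp\mu^{-\ell}$, combined with a union bound over the $\asymp\mu^a$ surviving level-$a$ cubes and Borel--Cantelli, is summable exactly when $L_a/\log a\to\infty$. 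Your Chernoff iteration in fact yields the cleaner statement $\mathbb{P}(\#M_\ell\ge x\mu^\ell)\le Ce^{-cx}$ uniformly in $\ell$ (so $\exp(-c\,m^{\epsilon\ell})$ rather than $\exp(-c\,m^{\epsilon\ell/2})$), which only helps. Two small repairs: in the first regime the definition of $\dim_\textup{A}^\phi$ requires the reference point to lie in the set, so take $x$ to be any point of $M\cap Q$ (nonempty since the subtrees survive) and inflate $R$ to a fixed multiple of $m^{-a}$ so that $B(x,R)\supseteq Q$, rather than using the centre of $Q$; and in the second regime, when covering $B(x,R)\cap M$ by retained level-$b$ cubes you should route every such cube through its (necessarily retained) level-$a$ ancestor meeting $B(x,R)$, which is exactly what makes the ``boundedly many level-$a$ cubes'' reduction legitimate --- worth a sentence in a written-up version, but not a gap.
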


Note that this result implies \eqref{aaa} by considering $\phi(R) = R^{1/\theta}$ and \eqref{aaa1} by considering $\phi(R) = R$.  A similar dichotomy, with the same threshold on $\phi$, was  obtained in a different random setting in \cite{Hare3}. The Assouad spectrum of random self-affine carpets was considered in \cite{frasersascha}.

\section{Applications: bi-Lipschitz and bi-H\"older distortion}
\label{sec:3}

A key aspect of this new perspective in dimension theory is in its applications.  The idea is that  if we can interpolate between two given dimensions in a meaningful way, then we will get strictly better information than when the dimensions are considered in isolation. This better information should, in turn, yield stronger applications. 

A common application of dimension theory is derived from the fact that dimensions are often invariant, or approximately  invariant in a quantifiable sense, under a family of  transformations.  For example, the Hausdorff, box and Assouad dimensions are all invariant under bi-Lipschitz maps and therefore provide useful invariants in the problem of classification up to bi-Lipschitz image.  The Assouad spectrum and intermediate dimensions are also invariant under  bi-Lipschitz maps and therefore provide a continuum of invariants in the same context.  Recall that an injective  map $f:X \to \mathbb{R}^d$ is \emph{bi-Lipschitz} if there exists a constant $C\geq 1$ such that for all distinct   $x,y \in X$
\begin{equation} \label{bilipz}
C^{-1} |x-y| \leq |f(x)-f(y)| \leq C |x-y|.
\end{equation}
Here we assume that $X$ is a bounded subset of $\mathbb{R}^d$. In particular,  for such $f$ we have
\[
\as X = \as f(X) \qquad \text{and} \qquad \dim_\theta X = \dim_\theta f(X)
\]
for all $\theta \in (0,1)$.  This was proved for the Assouad spectrum in \cite{Spectraa} and we prove it for the intermediate dimensions here.

\begin{lemma}
For any bounded set $X \subseteq \mathbb{R}^d$ and bi-Lipschitz map $f:X \to \mathbb{R}^d$, we have  $\dim_\theta X = \dim_\theta f(X)$ for all $\theta \in (0,1)$.
\end{lemma}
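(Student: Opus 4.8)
The plan is to prove the two inequalities $\dim_\theta f(X)\le\dim_\theta X$ and $\dim_\theta X\le\dim_\theta f(X)$ separately. Since $f$ is injective, $f^{-1}\colon f(X)\to X$ is well defined, and rearranging \eqref{bilipz} shows it is bi-Lipschitz with the \emph{same} constant $C$; moreover $f(X)\subseteq\mathbb{R}^d$ is bounded. Hence it suffices to establish $\dim_\theta f(X)\le\dim_\theta X$ for an arbitrary bounded set and bi-Lipschitz map, and then apply this with $f$ replaced by $f^{-1}$.

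A convenient preliminary reformulation: a family $\{U_i\}$ satisfies $|U_i|\le|U_j|^\theta$ for all $i,j$ if and only if there is some $\delta\in(0,1]$ with $\delta\le|U_i|\le\delta^\theta$ for all $i$ (take $\delta=\inf_i|U_i|$ in one direction; the other is immediate). Now fix $\alpha>\dim_\theta X$ and $\varepsilon>0$, and choose a cover $\{U_i\}$ of $X$ with $\delta\le|U_i|\le\delta^\theta$ for all $i$ and $\sum_i|U_i|^\alpha<\varepsilon$. Since each $|U_i|<\varepsilon^{1/\alpha}$, taking $\varepsilon$ small forces $\delta$ to be as small as we wish. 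Put $V_i=f(U_i\cap X)$ and discard those which are empty, so that $\{V_i\}$ covers $f(X)$ and, by \eqref{bilipz}, $|V_i|\le C|U_i|\le C\delta^\theta$.

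The obstacle is that $\{V_i\}$ need not be $\theta$-admissible: when $U_i\cap X$ is close to a single point, $|V_i|$ can be far smaller than $\delta$, violating the lower size constraint, and the distortion constant $C>1$ precludes the naive remedy (the image of $[\delta,\delta^\theta]$ under $t\mapsto Ct$ is not of the form $[\delta',(\delta')^{\theta}]$ for the obvious $\delta'$). To repair this, set $\delta'=C^{1/\theta}\delta$ and replace each $V_i$ by a set $V_i'\supseteq V_i$ with $|V_i'|=\max\{|V_i|,\,2\delta'\}$ (for instance, enlarge $V_i$ to a ball). Provided $\delta$ is small enough that $2\delta'=2C^{1/\theta}\delta\le C\delta^\theta=(\delta')^\theta$, every $V_i'$ has $\delta'\le|V_i'|\le(\delta')^\theta$, so $\{V_i'\}$ is a $\theta$-admissible cover of $f(X)$. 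Finally, since $|V_i'|\le|V_i|+2\delta'$, since $|V_i|\le C|U_i|$, and since each $|U_i|\ge\delta$ forces the family to have at most $\delta^{-\alpha}\sum_i|U_i|^\alpha<\delta^{-\alpha}\varepsilon$ members,
\[
\sum_i|V_i'|^\alpha\ \le\ 2^\alpha\sum_i|V_i|^\alpha+2^\alpha(2\delta')^\alpha\,\#\{i\}\ <\ \big(2^\alpha C^\alpha+2^{2\alpha}C^{\alpha/\theta}\big)\,\varepsilon\ =:\ K\varepsilon ,
\]
with $K=K(\alpha,\theta,C)$ independent of $\varepsilon$. As $\varepsilon>0$ was arbitrary this gives $\dim_\theta f(X)\le\alpha$, and letting $\alpha\downarrow\dim_\theta X$ yields $\dim_\theta f(X)\le\dim_\theta X$; applying the same reasoning to $f^{-1}$ gives the reverse inequality. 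The only step requiring genuine care is the enlargement: one must scale the images up to a uniform floor of order $\delta$ (not $\delta^\theta$), which restores $\theta$-admissibility while keeping $\sum_i|V_i'|^\alpha$ comparable to $\varepsilon$ — and this is possible precisely because the relevant scale $\delta$ is automatically small.
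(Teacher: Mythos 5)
Your proof is correct, and the overall architecture (pass to the image cover, repair it to restore $\theta$-admissibility, then get the reverse inequality by applying the argument to $f^{-1}$, which is bi-Lipschitz with the same constant) matches the paper. The interesting difference is in the repair step, where you and the paper attack the same defect from opposite ends. The paper sets $\delta=\inf_j|f(U_j)|$ and \emph{subdivides} the images that are too large, i.e.\ those with $\delta^\theta<|f(U_i)|\leq C^{1+\theta}\delta^\theta$, into balls of diameter $\delta^\theta$; this costs a bounded multiplicity factor $c_dC^{d(1+\theta)}$ coming from a volume/doubling argument in $\mathbb{R}^d$. You instead \emph{enlarge} the images that are too small up to a uniform floor of order $\delta$, and pay for it with the counting bound $\#\{i\}\leq\delta^{-\alpha}\sum_i|U_i|^\alpha<\delta^{-\alpha}\varepsilon$, which exploits the lower size constraint $|U_i|\geq\delta$ on the original cover. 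Your version has the mild advantage of not invoking the geometry of the ambient space (no covering-number estimate, hence no dependence on $d$ in the constant $K$), at the cost of needing $\delta$ small enough that $2C^{1/\theta}\delta\leq C\delta^\theta$ — which you correctly note is automatic since $\delta<\varepsilon^{1/\alpha}$. All the estimates check out: the admissibility of $\{V_i'\}$ at scale $\delta'=C^{1/\theta}\delta$, the bound $\sum_i|V_i'|^\alpha\leq K\varepsilon$ with $K$ independent of $\varepsilon$, and the reduction of the reverse inequality to the forward one. This is a valid alternative proof.
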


\begin{proof}
Let $s> \dim_\theta X$ and $\varepsilon>0$.  It follows that there exists a cover $\{U_i\}$ of $X$ with $|U_i| \leq |U_j|^\theta$ for all $i,j$ such that $ \sum_i |U_i|^s < \varepsilon$. It follows that $\{f(U_i)\}$ is a cover of $f(X)$ and that  $|f(U_i)| \leq C|U_i|  \leq C |U_j|^\theta \leq C^{1+\theta} |f(U_j)|^\theta $ for all $i,j$, where $C$ is the  constant from \eqref{bilipz}. Let $\delta = \inf_j |f(U_j)|$.  For all $i$ such that $\delta^\theta < |f(U_i)|  \leq C^{1+\theta} \delta^\theta$, cover the  set $f(U_i)$ with balls of diameter $\delta^\theta$ and replace the covering set $f(U_i)$ by these balls.  Note that we can always do this with fewer than $c_dC^{d(1+\theta)}$ balls where $c_d \geq 1$ is a constant depending only on $d$.  This yields an allowable cover $\{V_l\}$ of $f(X)$ and we have
\[
\sum_l |V_l|^s \leq c_dC^{d(1+\theta)} \sum_i C^s|U_i|^s  \leq c_dC^{d(1+\theta)+s}\varepsilon
\]
which proves $\dim_\theta f(X) \leq \dim_\theta X$ by letting $s \to\dim_\theta X$.  The reverse inequality follows by replacing $f$ by  $f^{-1}$ in the above. 
\end{proof}

An immediate consequence of the bi-Lipschitz invariance of the Assouad spectrum is that if $F_1$ and $F_2$ are Bedford-McMullen carpets associated with $m_1 \times n_1$ and $m_2 \times n_2$ grids, respectively, and there exists a bi-Lipschitz map between $F_1$ and $F_2$, then
\[
\frac{\log m_1}{\log n_1} = \frac{\log m_2}{\log n_2}.
\]
This is because this ratio corresponds to the phase transition in the  spectrum, and is therefore a bi-Lipschitz invariant.  This is not at all surprising, but serves as a simple example of the spectrum yielding applications which are not immediate when considering the dimensions in isolation.  Classification of self-affine sets up to bi-Lipschitz equivalence is an interesting problem, see \cite{miao}.

Bi-H\"older maps are a natural generalisation of bi-Lipschitz maps where more distortion is allowed.  We say an injective map $f: X \to\mathbb{R}^d$ is $(\alpha, \beta)$-H\"older, or bi-H\"older,  for $0<\alpha \leq 1 \leq \beta < \infty$  if there exists a constant $C \geq 1$ such that for all distinct $x,y \in X$
\[
C^{-1} |x-y|^\beta \leq  |f(x)-f(y)| \leq C |x-y|^\alpha.
\]
We note that being  $(1,1)$-H\"older is the same as being bi-Lipschitz.  Dimensions are typically not preserved under bi-H\"older maps, but one can often control the distortion.  For example, if $\dim$ is the Hausdorff, or upper or lower box dimension, and $f$ is $(\alpha, \beta)$-H\"older, then
\begin{equation} \label{control}
\frac{\dim X}{\beta} \leq \dim f(X)  \leq \frac{\dim X}{\alpha},
\end{equation}
see \cite[Proposition 3.3]{falconer}. Notably, the Assouad dimension does not satisfy such bounds, see \cite[Proposition 1.2]{LuXi}.  The Assouad spectrum, which is inherently more regular than the Assouad dimension, \emph{can} be controlled in this context but the control is more complicated than \eqref{control}.  The following lemma is adapted from \cite[Proposition 4.7]{Spectraa}.

\begin{lemma}  \label{Holderthm}
Suppose  $f : X\rightarrow \mathbb{R}^d$ is $(\alpha, \beta)$-H\"older.  Then,  for all $\theta \in (0,1)$, 
\[
\frac{1-\beta \theta/\alpha}{\beta(1-\theta)} \,  \dim_\mathrm{A}^{\beta \theta/\alpha} X \  \leq \  \dim_\mathrm{A}^{\theta} f(X) \  \leq \  \frac{1-\alpha \theta/\beta}{\alpha(1-\theta)} \, \dim_\mathrm{A}^{\alpha \theta/\beta} X
\]
where $\dim_\mathrm{A}^{\beta \theta/\alpha} X$ is taken to equal 0 if $\beta \theta/\alpha \geq 1$.
\end{lemma}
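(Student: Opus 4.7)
The plan is to prove the upper bound by pushing an efficient Assouad-spectrum cover of $X$ through $f$, and then to deduce the lower bound by applying the same upper bound to the inverse map $g = f^{-1}$, which is itself $(1/\beta, 1/\alpha)$-Hölder. The key scaling that makes everything fit together is the pair $r' = r^{1/\alpha}$ on the $X$-side and $r^\theta$ on the $f(X)$-side: setting $R = C^{1/\beta} r^{\theta/\beta}$, the lower Hölder estimate $|u-v| \le C^{1/\beta}|f(u)-f(v)|^{1/\beta}$ places the preimage under $f$ of any ball $B(y, r^\theta) \cap f(X)$ inside $B(x, R) \cap X$, while $R$ coincides with $(r')^{\alpha\theta/\beta}$ up to a multiplicative constant.

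For the upper bound, fix $y = f(x) \in f(X)$, $r \in (0,1)$, and $s > \dim_\textup{A}^{\alpha\theta/\beta} X$. Applying the definition of the $(\alpha\theta/\beta)$-Assouad spectrum at scale $r'$ to $B(x, R) \cap X$ produces a cover by at most $C_1 r^{-s(1-\alpha\theta/\beta)/\alpha}$ balls of radius $r'$ (the constant mismatch between $R$ and $(r')^{\alpha\theta/\beta}$ being absorbed by a routine doubling argument). The upper Hölder estimate $|f(u)-f(v)| \le C|u-v|^\alpha$ turns each such $r'$-ball into a set of diameter at most $C(r')^\alpha = Cr$, and each such set can be covered by $O(1)$ balls of radius $r$. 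Pushing this cover forward through $f$ gives
\[
N_r\bigl(B(y, r^\theta) \cap f(X)\bigr) \ \le \ C_2\, r^{-s(1-\alpha\theta/\beta)/\alpha} \ = \ C_2 \left(\frac{r^\theta}{r}\right)^{s(1-\alpha\theta/\beta)/(\alpha(1-\theta))},
\]
with $C_2$ independent of $y$ and $r$. Hence $\dim_\textup{A}^\theta f(X) \le s(1-\alpha\theta/\beta)/(\alpha(1-\theta))$, and letting $s \to \dim_\textup{A}^{\alpha\theta/\beta} X$ yields the upper bound.

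For the lower bound, inverting the Hölder inequalities shows that $g = f^{-1}$ is $(1/\beta, 1/\alpha)$-Hölder. We may assume $\beta\theta/\alpha < 1$, since the statement is otherwise vacuous. Applying the upper bound just established to $g : f(X) \to X$ at parameter $\theta^* = \beta\theta/\alpha$, and using $(1/\beta)/(1/\alpha) = \alpha/\beta$ to identify the spectrum parameter for $f(X)$ as $\alpha\theta^*/\beta = \theta$, a short simplification gives
\[
\dim_\textup{A}^{\beta\theta/\alpha} X \ \le \ \frac{\beta(1-\theta)}{1 - \beta\theta/\alpha}\, \dim_\textup{A}^\theta f(X),
\]
which rearranges to exactly the stated lower bound.

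The argument is conceptually transparent once one fixes the scaling $r' = r^{1/\alpha}$, and the main obstacle is bookkeeping: every scale identity above is accurate only up to multiplicative constants coming from $C$, from $C^{1/\beta}$, and from replacing diameter-$Cr$ sets by $r$-balls. One must fold each such constant into a single final constant independent of $r$ and $y$, so that the resulting bound genuinely witnesses a value of the target Assouad spectrum. Standard doubling-covering arguments in $\mathbb{R}^d$ handle this routinely, but it is the only point at which care is required.
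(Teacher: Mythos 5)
Your argument is correct: the covering argument for the upper bound (pull $B(y,r^\theta)\cap f(X)$ back into $B(x,C^{1/\beta}r^{\theta/\beta})\cap X$, cover at scale $r^{1/\alpha}$ using the spectrum of $X$ at parameter $\alpha\theta/\beta$, push forward) gives exactly the stated exponent, and deducing the lower bound by applying this to $f^{-1}$, which is $(1/\beta,1/\alpha)$-H\"older, with $\theta^{*}=\beta\theta/\alpha$ is legitimate, including your dismissal of the case $\beta\theta/\alpha\geq 1$ as trivial. The paper itself gives no proof of Lemma \ref{Holderthm} (it is cited as adapted from Fraser--Yu), but your route is essentially the standard one there, and the only point needing care --- absorbing the constant mismatch between $C^{1/\beta}(r^{1/\alpha})^{\alpha\theta/\beta}$ and $(r^{1/\alpha})^{\alpha\theta/\beta}$ via a doubling/re-centering argument --- is exactly the routine step you flag.
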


In order to motivate this result, we consider the \emph{winding problem}.   Given $p \geq 1$, let 
\[
\mathcal{S}_p= \{ x^{-p} \exp(ix) : 1<x<\infty\}
\]
which is a polynomially winding spiral with focal point at the origin. The winding problem concerns quantifying how little distortion is required to map $(0,1)$ onto $\mathcal{S}_p$.  For example, if $x^{-p}$ is replaced by $ e^{-cx}$ for some $c>0$, it is possible to map $(0,1)$ onto the corresponding spiral via a bi-Lipschitz map, see  \cite{unwindspirals}.  However, this is not possible for the spirals $\mathcal{S}_p$, see  \cite{spirals}. Therefore, it is natural to consider bi-H\"older winding functions, and attempt to optimise the H\"older exponents.  

Here there is a possible application of dimension theory: if the dimensions of $\mathcal{S}_p$ can be computed, and strictly exceed $1$, then \eqref{control} (or similar) will  directly lead to bounds on the possible H\"older exponents for winding functions $f: (0,1) \to \mathcal{S}_p$.  However, since $\mathcal{S}_p$ can be broken up into a countable collection of bi-Lipschitz curves, it follows that  $\hd \mathcal{S}_p = 1$.  Moreover, it was proved in \cite{fraserwinding} that $\bd \mathcal{S}_p = 1$.  This does not follow from  the countable decomposition since box dimension is not countably stable. Therefore, neither the Hausdorff nor box dimensions give any information on the H\"older exponents.   It was proved in \cite{fraserwinding} that $\ad \mathcal{S}_p = 2$, but despite this being strictly greater than $\ad (0,1) = 1$, we also get no information from the Assouad dimension since the change in dimension  cannot be controlled by the H\"older exponents. 

 It was proved in \cite{fraserwinding} that 
\[
\as \mathcal{S}_p =  1+ \frac{\theta}{p(1-\theta)} 
\]
for $0<\theta <\frac{p}{1+p}$, and 
\[
\as \mathcal{S}_p = 2
\]
for $\frac{p}{1+p}\leq \theta < 1$, see Figure \ref{fig:spiralspec}. Therefore, since we do have some control on how the Assouad spectrum distorts under bi-H\"older maps, this dimension formula \emph{does} yield non-trivial information.  Specifically, we get that if $f : (0,1) \to \mathcal{S}_p$ is an $(\alpha, \beta)$-H\"{o}lder map, then
\begin{equation} \label{windbound}
\alpha \leq \frac{p\beta +\beta}{ p+2\beta}.
\end{equation}
This follows by applying the first inequality in Lemma \ref{Holderthm} to $f^{-1}$ for $\theta = \alpha p/(\beta p+\beta)$. In particular, if $\beta = 1$, then $\alpha \leq \frac{p+1}{p+2}<1$, which is a stronger, quantitative, analogue  of the fact that $(0,1)$ cannot be mapped to $\mathcal{S}_p$ via a bi-Lipschitz map. 

It turns out that the bounds \eqref{windbound} are \emph{not} sharp.  The sharp relationship between $\alpha$ and $\beta$ is given by
\[
\alpha  \leq \frac{p\beta}{p+\beta},
\]
see \cite{fraserwinding} and Figure \ref{fig:holderbounds}.  We note the amusing resemblance of this relationship to that of \emph{Sobolev conjugates}.  Recall the \emph{Sobolev embedding theorem} which says that,  for $1 \leq p<d$, one has 
\[
W^{1,p}(\mathbb{R}^d) \subset L^{q}(\mathbb{R}^d)
\]
where $q$ is defined by
\[
p = \frac{dq}{d+q},
\]
that is, $q$ is the Sobolev conjugate of $p$.

\begin{figure}[H]
\centering
\includegraphics[width=0.87\textwidth]{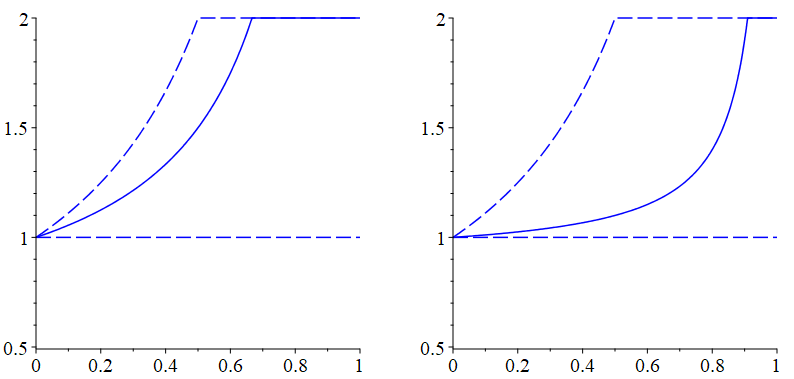}
\caption{Plots of $ \as\mathcal{S}_p$  (solid blue) as a function of $\theta$.  On the left $p=2$ and on the right $p=10$.  For reference, the general upper and lower bounds for the Assouad spectrum from  Lemma \ref{standardbounds}  are shown as dashed blue lines.  }
\label{fig:spiralspec}       
\end{figure}

\begin{figure}[H]
\centering
\includegraphics[width=0.87\textwidth]{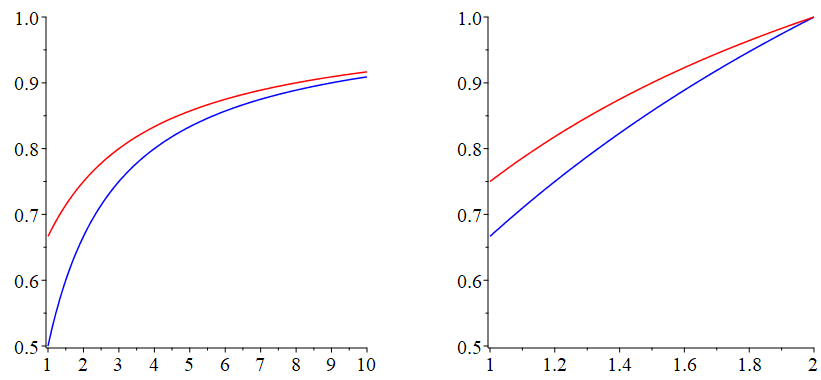}
\caption{Left: a plot of the upper bounds for $\alpha$ as a function of $p$ where $\beta = 1$ is fixed.  The sharp upper bound is shown in blue and the upper bound given by the Assouad spectrum is shown in red.  Right: a plot of the upper bounds for $\alpha$ as a function of $\beta$ where $p=2$ is fixed. The sharp upper bound is shown in blue and the upper bound given by the Assouad spectrum is shown in red.  }
\label{fig:holderbounds}       
\end{figure}

A further application of the Assouad spectrum in this context is that $\as \mathcal{S}_p$, distinguishes spirals with different winding rates $p$.  Note that this is \emph{not} achieved by the Hausdorff, box, or Assouad dimensions, since these (somewhat surprisingly) do not depend on $p$.  In particular, the Assouad spectrum shows that $\mathcal{S}_p$ and $\mathcal{S}_{q}$ are not bi-Lipschitz equivalent for $p \neq q$.

\section{Further remarks}
\label{sec:4}

We note that the Assouad spectrum of the spirals considered in the previous section exhibits a single phase transition at $\frac{p}{p+1}$.  Similar to the self-affine carpets, it is easy to see that this phase transition occurs strictly to the right of the phase transition in the general upper bound, provided $p>1$, and therefore the general upper bound is not realised by these spirals.  This gives rise to a similar form for the spectrum of the carpets and the spectrum of the spirals.   We observe that this similarity goes a little deeper.  In fact, in both cases we have the formula
\begin{equation} \label{funformula}
\as E =  \min \left\{ \dim_\mathrm{B} E + \frac{( 1- \rho)\theta }{(1-\theta)\rho}  \left( \dim_\mathrm{A} E - \dim_\mathrm{B} E\right) , \ \dim_\mathrm{A} E \right\},
\end{equation}
where $\rho$ is a constant which holds particular geometric significance for the object  $E$.  Specifically, for carpets $\rho  = \frac{\log m}{\log n}$, and for  spirals $\rho  = \frac{p}{p+1}$.  Also note that $\rho $ is the   value of $\theta$ at which the unique phase transition occurs.  In both cases $\rho $ captures some fundamental scaling property of the set.  For carpets, the  $k$th level rectangles in the standard construction of $F$  are of size $m^{-k} \times n^{-k}$ and therefore $\rho $ is the ``logarithmic eccentricity''.  For spirals, the \emph{$k$th  revolution}, given by
\[
\{ x^{-p} \exp(ix) : 1+2\pi(k-1)<x\leq 1+2\pi k\},
\]
has diameter comparable to $k^{-p}$, while the distance between the end points (or, outer radius minus inner radius) is comparable to $k^{-(p+1)}$.  These are fundamental measurements considered in the winding problem, see \cite{fraserwinding}, and measure how big the $k$th revolution is and how tightly it  is wound, respectively.  Again the ``logarithmic eccentricity'' is
\[
\frac{\log \left(k^{-p} \right)}{\log \left( k^{-(p+1)} \right) }= \frac{p}{p+1} = \rho.
\]
We wonder if this is a coincidence, or whether it is reflective of a more general phenomenon.  It would be interesting to identify other natural classes of set for which this formula holds for a particular choice of ``fundamental ratio'' $\rho$. Finally, we note that the Assouad spectrum does \emph{not} generally satisfy an equation of the form \eqref{funformula}, see \cite{canadian,Spectraa,Spectrab}.

\section*{Acknowledgements}
The author thanks Stuart Burrell, Kenneth Falconer,   Kathryn Hare, Kevin Hare, Antti K\"aenm\"aki, Tom Kempton, Sascha Troscheit,  and Han Yu for many interesting discussions relating to dimension interpolation.  He was financially supported in part by the  EPSRC Standard Grant EP/R015104/1.  He is also grateful to the Leverhulme Trust for funding his project \emph{New Perspectives in the dimension theory of fractals} (2019-2023), which is largely focused on the concept of dimension interpolation. Finally, he thanks Stuart Burrell and Kenneth Falconer for making several helpful comments on an earlier version of this article.

\end{document}